\documentclass[11pt,reqno]{amsart}
\usepackage[utf8]{inputenc}
\usepackage[T1]{fontenc}
\usepackage[english]{babel}
\usepackage{amsmath,amssymb,amsthm,mathtools}
\usepackage{graphicx}
\usepackage{booktabs}
\usepackage[margin=2.6cm]{geometry}
\usepackage{xcolor}
\usepackage[colorlinks=true,linkcolor=blue!45!black,citecolor=green!40!black,
            urlcolor=blue!45!black]{hyperref}

\newcommand{\Q}{\mathbb{Q}}
\newcommand{\R}{\mathbb{R}}
\newcommand{\C}{\mathbb{C}}
\newcommand{\rank}{\operatorname{rank}}
\newcommand{\norm}[1]{\lVert #1\rVert}
\newcommand{\Gal}{\operatorname{Gal}}

\theoremstyle{plain}
\newtheorem{theorem}{Theorem}[section]
\newtheorem{proposition}[theorem]{Proposition}

\theoremstyle{definition}
\newtheorem{definition}[theorem]{Definition}
\newtheorem{remark}[theorem]{Remark}

\begin{document}

\title[Origametry and antagonistic heptagon graphs]
{Origametry and antagonistic heptagon graphs: the braced $\{35,1\}$ and the
complete $\{21,2\}$}

\author{Haroldo Costa Silva Filho}
\address{Professor, CEFET-RJ; and PhD student, PGCComp --- Graduate Program in
Computational Sciences, UERJ, Rio de Janeiro, Brazil}
\email{haroldo.filho@cefet-rj.br}

\subjclass[2020]{52C25, 05C62, 12F10, 51M15, 68W30}
\keywords{Unit-distance graph, rigidity, Laman number, Galois theory, origami,
Huzita--Hatori folds, mountain--valley assignment, regular heptagon, straightedge
and compass}

\begin{abstract}
A folded sheet of paper can construct lengths that a straightedge and compass never
can --- among them $\cos\tfrac{2\pi}{7}$, the proportion hidden in a regular
heptagon. Fold as cleverly as you like, though, and some lengths still slip away.
This paper tells that story through two small graphs shaped like heptagons that turn
out to be complete opposites. Both are \emph{unit-distance graphs} --- drawings in
the plane with every edge exactly one unit long --- and the plainest heptagon of all
is the $7$-cycle, seven unit rods hinged in a ring that settles into a regular
heptagon but flexes like a slack chain. Brace that ring rigid and you get the
\emph{complete} heptagon $\{21,2\}$, a single fixed shape whose corners land on
$\cos\tfrac{2\pi}{7}$; one Beloch fold reaches it, because folding solves the cubic
$8x^3+4x^2-4x-1$. Delete two of its vertices and you get the \emph{braced} heptagon
$\{35,1\}$: still only just rigid, yet with thousands of possible shapes --- its
generic realization count is $2^{6}\cdot103\cdot587$, so it looks as though its
corners should escape folding entirely. That is the surprise the paper resolves:
the regular braced heptagon is still reachable by paper --- one cubic Beloch fold
followed by a single quadratic --- because the angle that pins it down, a root of an
irreducible degree-$12$ palindrome, collapses to a quadratic over the heptagon's own
cubic field $\Q(\cos\tfrac{2\pi}{7})$. Both heptagons therefore sit \emph{inside}
Alperin's field of \emph{origami numbers}; the primes $103,587$ belong to the generic
count, not to the folded shape. Between them runs the whole tale: rigidity (Laman's
theorem and the count of realizations), Galois theory, and origametry. The fold
choices of the construction turn out to be mountain--valley assignments, and every
claim is checked in exact arithmetic.
\end{abstract}

\maketitle

\section{Introduction}

Which lengths can you actually construct? A straightedge and compass give one answer
--- Gauss's --- while a folded sheet of paper gives a larger one, reaching
$\cos\tfrac{2\pi}{7}$, the proportion of a regular heptagon that the compass cannot.
This paper follows that thread to two graphs sitting on opposite sides of what
folding can do; but they are easiest to meet from the ground floor.

The simplest unit-distance graphs are the ones we can already picture. The cycle
$C_n$ is realized by the vertices of a regular $n$-gon, and the Petersen graph by
a symmetric ten-point drawing. Others sit at the heart of a famous open problem:
the Moser spindle --- seven vertices hinged from two rhombi --- and the ten-vertex
Golomb graph are unit-distance graphs that require four colors, and in 2018 de Grey
exhibited one requiring five, lifting the chromatic number of the plane (the
Hadwiger--Nelson problem) into $\{5,6,7\}$ after decades pinned at $\{4,\dots,7\}$.
What makes the family subtle is that not every graph fits: the complete graph $K_4$
has \emph{no} unit-distance realization in the plane at all, since four
pairwise-equidistant points require a third dimension, where they occupy the
vertices of a regular tetrahedron. Between these extremes lies a rich middle ground
in which a graph's combinatorics, its rigidity, and the arithmetic of the
coordinates that realize it are bound together --- the binding that separates a
graph a straightedge and compass can draw from one that only paper folding reaches.
This paper studies that binding for two graphs shaped like heptagons, twins in
appearance but antagonists in rigidity and in coordinate field: the braced heptagon
$\{35,1\}$ and the complete heptagon $\{21,2\}$.

Formally, a \emph{unit-distance graph} (UDG) in the plane is a graph $G=(V,E)$
admitting a map $p:V\to\R^2$ with $\norm{p_u-p_v}=1$ for every edge
$\{u,v\}\in E$; the
realization is \emph{faithful} if in addition $\norm{p_u-p_v}\neq1$ for every
non-adjacent pair. Recognizing UDGs is $\exists\R$-complete~\cite{Schaefer}, which
motivates the study of structured families. We follow Pegg's Wolfram \textsc{GraphData} identifiers, in which the pair
$\{a,b\}$ is the database index of the graph, \emph{not} a vertex or edge count:
the \emph{braced} heptagon $\{35,1\}$ ($19$ vertices, $35$ edges) and the
\emph{complete} heptagon $\{21,2\}$ ($21$ vertices, $42$ edges), the latter being
$\mathtt{GraphData[\{"UnitDistance",\{21,2\}\}]}$. \emph{Neither} heptagon is
straightedge-and-compass constructible --- already the regular $7$-gon is not, by
Gauss--Wantzel, since $7$ is not a Fermat prime --- so the natural setting is
\emph{origametry}~\cite{Hull}: the construction of each realization by Huzita--Hatori
folds, and the distinction between the quadratic fold $O5$ (straightedge-and-compass
strength) and the cubic Beloch fold $O6$. The two graphs are \emph{antagonists}: the
complete $\{21,2\}$ is over-braced and globally rigid with a cyclic-cubic field, while
the braced $\{35,1\}$ --- a two-vertex deletion of it --- is isostatic with thousands
of realizations; yet, as we prove (Theorem~\ref{thm:origami35}), its unit realization
is again \emph{origami-constructible}, a cubic (trisection) step followed by
quadratics, so the two heptagons are antagonists in rigidity but kin in
constructibility.

\section{Definitions}
\label{sec:defs}

\begin{definition}[Framework, faithful UDG]
A \emph{realization} of $G=(V,E)$ is a map $p:V\to\R^2$; $(G,p)$ is a
\emph{framework}. It is a \emph{unit-distance realization} if $\norm{p_u-p_v}=1$
on edges, and \emph{faithful} if $\norm{p_u-p_v}\neq1$ on non-edges. A non-edge
at distance $1$ is a \emph{phantom edge}.
\end{definition}

\begin{definition}[Rigidity matrix, self-stress]
The \emph{rigidity matrix} $R(p)\in\R^{|E|\times2n}$ has, in the row of
$\{u,v\}$, the vector $(p_u-p_v)$ in the columns of $u$ and $(p_v-p_u)$ in $v$.
The framework is infinitesimally rigid iff $\rank R(p)=2n-3$. The right null
space consists of infinitesimal motions ($3$ isometries $+$ mechanisms); the left
null space consists of \emph{self-stresses}. A rigid framework is
\emph{isostatic} if $|E|=2n-3$ (no self-stress) and \emph{over-braced} if
$|E|>2n-3$.
\end{definition}

\begin{definition}[Laman graph and Laman number]\label{def:laman}
By Laman's Theorem~\cite{Laman}, $G$ on $n$ vertices is minimally rigid
(generically isostatic) in the plane iff $|E|=2n-3$ and $|E(H)|\le2|V(H)|-3$ for
every subgraph $H$; such $G$ is a \emph{Laman graph}. For generic squared edge
lengths $\lambda\in\C^E$, with a base edge anchored, the system
$\norm{p_i-p_j}^2=\lambda_{ij}$ is $0$-dimensional; its number of complex
solutions is finite and independent of the generic $\lambda$. This is the
\emph{Laman number} $c(G)$, the number of complex realizations up to isometry;
$c=2$ for the triangle, $c=24$ for the triangular prism. It satisfies a tropical
recursion~\cite{Capco}.
\end{definition}

\begin{definition}[Global rigidity; degree; Galois group]
$(G,p)$ is \emph{globally rigid} if every realization with the same edge lengths
is congruent to $p$. Hendrickson's necessary conditions for generic global
rigidity in the plane are $3$-connectivity and redundant rigidity~\cite{Hendrickson}.
For a field extension $K/\Q$, $[K:\Q]=\dim_\Q K$ and the tower law
$[K:\Q]=[K:L][L:\Q]$ holds; $\Gal(K/\Q)$ is the group of $\Q$-automorphisms. A
number is straightedge-and-compass constructible iff it lies in a tower of
quadratic extensions; origami additionally realizes cubics.
\end{definition}

\section{Rigidity and the complete heptagon}

\begin{proposition}\label{prop:rigidity}
On realizations certified to $e_{\max}\sim10^{-16}$: $\{35,1\}$ has
$\rank R=35=2n-3$, $0$ self-stresses (isostatic, not redundantly rigid, hence
not globally rigid); $\{21,2\}$ has $\rank R=39=2n-3$ and $3$ self-stresses
(over-braced, $D_g=-3$, redundantly rigid, hence globally rigid).
\end{proposition}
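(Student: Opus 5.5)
The plan is a direct certified linear-algebra computation on explicit realizations, followed by combinatorial deductions from Laman's and Hendrickson's theorems. The Jackson--Jordán/Connelly converse to Hendrickson's theorem is also needed, and I take it as a standard result. The vertex counts are $n=19$ for $\{35,1\}$ and $n=21$ for $\{21,2\}$.

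\textbf{Step 1: realizations and ranks.} I first fix explicit coordinates for both frameworks. The most natural choice is the regular-heptagon coordinates, which lie in $\Q(\cos\tfrac{2\pi}{7},\sin\tfrac{2\pi}{7})$, or their floating-point images; in these coordinates every edge length is $1$ to within $e_{\max}\sim10^{-16}$. I then assemble $R(p)$ as in the definition of the rigidity matrix: $35\times38$ for $\{35,1\}$ and $42\times42$ for $\{21,2\}$. I compute its singular values and certify the rank by exhibiting a gap. The $\rank R$ nonzero singular values must be of order $10^{-2}$ or larger, and the rest must be at roundoff level. As an independent check, I repeat the rank computation in exact arithmetic over the number field, since the coordinates are algebraic. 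This gives $\rank R=35$ and $\rank R=39$, so both frameworks are infinitesimally rigid, because $2n-3$ equals $35$ and $39$ respectively.

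\textbf{Step 2: self-stresses and the isostatic case.} Rank--nullity on the left null space gives the number of independent self-stresses as $|E|-\rank R$. This is $35-35=0$ for $\{35,1\}$ and $42-39=3$ for $\{21,2\}$, so $D_g=2n-3-|E|=-3$ for the latter. For $\{35,1\}$, having no self-stress means the rows of $R(p)$ are independent. Deleting any edge therefore drops the rank to $34<2n-3$, so the framework is not redundantly rigid. By Hendrickson's necessary condition it is then not (generically) globally rigid. For a concrete witness I would also point to the Laman number $c(G)>2$: infinitesimal rigidity makes $p$ a regular point, so the generic count transfers.

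\textbf{Step 3: the over-braced case.} For $\{21,2\}$ I check redundant rigidity by deleting each of the $42$ rows in turn and confirming that the rank stays $39$. Equivalently, I check that the three-dimensional stress space has no coordinate vanishing identically, that is, every edge carries a nonzero stress in some stress. I also verify $3$-connectivity combinatorially by checking that no pair of vertices disconnects the graph. Hendrickson's conditions are only necessary, so global rigidity itself needs the converse theorem: $3$-connected plus redundantly rigid implies generically globally rigid.

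For the specific (non-generic, regular) realization, I would instead use Connelly's sufficient condition. I would find a combination $\omega$ of the three stresses whose stress matrix $\Omega$ is positive semidefinite of rank $n-3=18$. Together with infinitesimal rigidity, and the edge directions not lying on a conic at infinity, this certifies global rigidity of $p$ itself.

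\textbf{Main obstacle.} I expect the hardest step to be this last certificate: finding the PSD stress of rank $18$. I would first try the dihedral symmetry. Averaging over $D_7$ should reduce the stress space to a small number of orbit parameters, and I would search that low-dimensional family for a positive semidefinite $\Omega$, checking its eigenvalues in exact arithmetic. The numerical rank certification in Step 1 is routine by comparison, given the exact-arithmetic fallback.
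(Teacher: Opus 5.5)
Your main line is exactly the paper's argument: the computation behind the proposition plus Remark~\ref{rem:superrigid}. That line is to certify $\rank R=2n-3$ on the certified realizations, count self-stresses by rank--nullity, apply Hendrickson's necessary condition to the isostatic $\{35,1\}$, and, for $\{21,2\}$, run the edge-deletion test, check $3$-connectivity and invoke Jackson--Jord\'an. It proves the statement in the sense the paper uses, namely \emph{generic} global rigidity. Running the deletion test at a special configuration is legitimate, because rank at any particular configuration bounds the generic rank from below.

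Two side claims are wrong, though.
\begin{itemize}
\item Your Laman-number ``witness'' for $\{35,1\}$ fails. $c(G)$ counts \emph{complex} realizations for \emph{generic} lengths. Regularity of $p$ only says that $p$ itself persists under perturbation; it does not say the other solutions stay finite, distinct or real at the special unit lengths. A complex realization is no evidence against real global rigidity anyway. A genuine witness is any two of the paper's certified non-congruent real unit realizations.
\item The $\{21,2\}$ coordinates do not lie in $\Q(\cos\tfrac{2\pi}{7},\sin\tfrac{2\pi}{7})$. Seen from the centroid, $p_{1,0}\cdot p_{2,0}=\rho_1\rho_2\cos\tfrac{\pi}{21}$. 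Since $\rho_1,\rho_2$ do lie in $\Q(\cos\tfrac{2\pi}{7},\sin\tfrac{2\pi}{7})$, containment would force $\cos\tfrac{\pi}{21}$ into that field too. But $\cos\tfrac{\pi}{21}$ generates $\Q(\zeta_{42})^{+}$ of degree $6$, which is not contained in $\Q(\zeta_{28})\supset\Q(\cos\tfrac{2\pi}{7},\sin\tfrac{2\pi}{7})$. Your exact rank check must therefore run in a larger cyclotomic field.
\end{itemize}

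The step you call the main obstacle cannot succeed. The realization has only the rotational symmetry $C_7$: the offsets $\pm\pi/21$ together with $\rho_2\neq\rho_3$ rule out every reflection, so there is no $D_7$ to average over. Averaging a nonzero PSD stress over $C_7$ gives a nonzero PSD $C_7$-invariant stress.

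But the invariant stresses form a line. Equilibrium at one vertex per layer, together with $\rho_1=\rho_2+\rho_3$ and $\rho_1^2-\rho_2\rho_3=1$, forces, up to scale, layer stresses $(\omega_1,\omega_2,\omega_3)=(-\rho_1,\rho_2,\rho_3)$ and brace stresses $(\omega_{12},\omega_{13},\omega_{23})=(\rho_3,\rho_2,-\rho_1)$. On layer-constant vectors $x_{r,k}=a_r$ the energy $\sum_{ij\in E}\omega_{ij}(x_i-x_j)^2$ equals $7\bigl[\rho_3(a_1-a_2)^2+\rho_2(a_1-a_3)^2-\rho_1(a_2-a_3)^2\bigr]$. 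This is $7\rho_1>0$ at $a=(1,0,0)$ and $-7\rho_2<0$ at $a=(0,1,0)$.

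So the framework carries no nonzero semidefinite stress at all. It is not super stable, indeed not universally rigid, and Connelly's criterion can never certify global rigidity of this non-generic realization. Uniqueness of the unit realization needs a different argument, for instance solving the unit-distance system directly. The paper's ``unique up to congruence'' in Remark~\ref{rem:superrigid} is likewise only justified generically.
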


\begin{definition}[Huzita--Hatori folds $O5$, $O6$]
$O5$ takes a point $P$ onto a line $r$ through $Q$; it equals the intersection of
$r$ with the unit circle about $Q$, solving a quadratic. $O6$ (Beloch) takes
$P_1\to r_1$ and $P_2\to r_2$ simultaneously, solving a cubic.
\end{definition}

For completeness we record the full set of Huzita--Hatori fold axioms and mark the
two this construction relies on (Table~\ref{tab:folds}, Figure~\ref{fig:folds}).
Only $O6$ adds cube roots; $O1$--$O5$ and $O7$ are exactly straightedge-and-compass.
The complete $\{21,2\}$ provably needs the cubic $O6$. The braced $\{35,1\}$ places
its $14$ inner vertices by the quadratic fold~\eqref{eq:O5}, but its three free
angles are not free: the closure forces the heptagon angles $\tfrac{5\pi}{7},
\tfrac{2\pi}{7}$ and a third angle whose cosine is quadratic over
$\Q(\cos\tfrac{2\pi}{7})$ (Theorem~\ref{thm:origami35}). It therefore needs one
cubic $O6$ (trisection) and then quadratics --- origami, not straightedge-and-compass,
but origami all the same. Its generic realization count carries the primes $103,587$
(Section~\ref{sec:origami-numbers}); these belong to the generic complex variety, not
to the folded unit shape.

\begin{table}[h]\centering\small
\caption{The seven Huzita--Hatori fold axioms and the two used here. Only $O5$
(a quadratic) and $O6$ (the cubic Beloch fold) carry algebraic weight in this
paper.}
\label{tab:folds}
\begin{tabular}{@{}c l l l@{}}
\toprule
fold & geometric operation & power & used\\\midrule
$O1$ & line through two points $P,Q$                        & incidence          & implicitly\\
$O2$ & fold $P$ onto $Q$ (perpendicular bisector)           & linear             & implicitly\\
$O3$ & fold line $r$ onto line $s$ (bisector)               & linear             & --\\
$O4$ & crease through $P$ perpendicular to $r$              & linear             & --\\
$O5$ & fold $P$ onto line $r$, crease through $Q$           & quadratic $\sqrt{\ }$ & \textbf{yes}: braces of $\{35,1\}$\\
$O6$ & fold $P_1\!\to r_1$, $P_2\!\to r_2$ at once (Beloch) & cubic $\sqrt[3]{\ }$  & \textbf{yes}: complete $\{21,2\}$\\
$O7$ & fold $P$ onto $r_1$, crease perpendicular to $r_2$   & quadratic          & --\\
\bottomrule
\end{tabular}
\end{table}

\begin{figure}[t]\centering
\includegraphics[width=.9\textwidth]{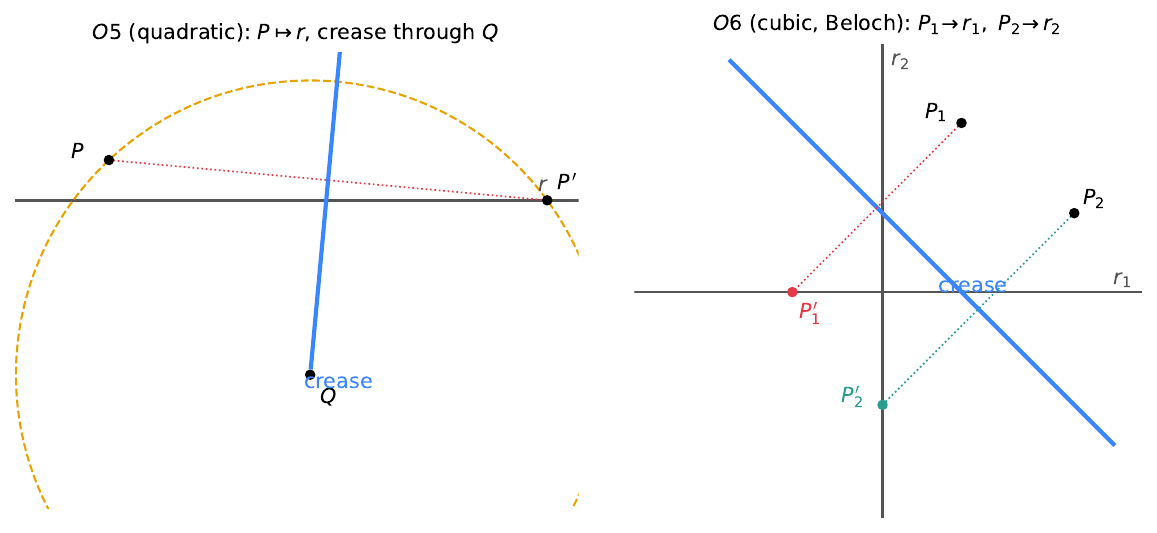}
\caption{The two folds used. \emph{Left,} $O5$ (quadratic): the crease reflects a
point $P$ onto a line $r$ while passing through a fixed point $Q$ --- equivalently
$r$ meets the circle about $Q$ through $P$. \emph{Right,} $O6$ (the cubic Beloch
fold): a single crease reflects $P_1$ onto $r_1$ and $P_2$ onto $r_2$
simultaneously; the crease is a common tangent of two parabolas, and its slope
solves a cubic.}
\label{fig:folds}
\end{figure}

\begin{theorem}[Closed form, geometric certificate, and field of $\{21,2\}$]\label{thm:complete}
The unique realization of $\{21,2\}$ consists of three concentric unit-edge
star-heptagons $\{7/1\},\{7/2\},\{7/3\}$ (Figure~\ref{fig:complete}): for layer
$r\in\{1,2,3\}$ and $k\in\{0,\dots,6\}$,
\[
 p_{r,k}=\rho_r\,(\cos\theta_{r,k},\sin\theta_{r,k}),\qquad
 \rho_r=\frac{1}{2\sin(r\pi/7)},\qquad
 \theta_{r,k}=\frac{2\pi k}{7}+\delta_r,
\]
with layer offsets $(\delta_1,\delta_2,\delta_3)=\bigl(0,\tfrac{\pi}{21},-\tfrac{\pi}{21}\bigr)$.
All $42$ edges then have length \emph{exactly} $1$, certified symbolically: each layer
edge is a step-$r$ chord, $2\rho_r\sin(r\pi/7)=1$; and each brace closes by the law of
cosines --- the $\{1,2\}$ and $\{1,3\}$ braces subtend $60^\circ$ with
$\rho_r^2+\rho_s^2-\rho_r\rho_s=1$, and the $\{2,3\}$ braces subtend $120^\circ$ with
$\rho_2^2+\rho_3^2+\rho_2\rho_3=1$. Hence the realization is \emph{exact} over
$\Q(\cos\tfrac{2\pi}{7},\sin\tfrac{2\pi}{7})$, which contains the cubic
$\Q(\cos\tfrac{2\pi}{7})$; for instance $d(1,4)^2=1+4\cos\tfrac{2\pi}{7}+4\cos^2\tfrac{2\pi}{7}$.
Therefore $\{21,2\}$ requires the cubic $O6$ and is not straightedge-and-compass
constructible.
\end{theorem}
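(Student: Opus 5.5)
The plan has four parts: verify that all $42$ edges have length $1$, deduce uniqueness from Proposition~\ref{prop:rigidity}, identify the coordinate field, and close with a degree argument for non-constructibility.

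\emph{Edge lengths.} First I sort the $42$ edges by type.
\begin{itemize}
\item There are $21$ \emph{layer edges}: for each $r$, the chords joining $p_{r,k}$ to $p_{r,k+r}$. They subtend the angle $2\pi r/7$ at the origin. The chord formula gives length $2\rho_r\sin(r\pi/7)=1$ directly from the definition of $\rho_r$.
\item There are $21$ \emph{braces}, $7$ for each of the pairs $\{1,2\},\{1,3\},\{2,3\}$.
\end{itemize}
For each brace I identify, from the combinatorics of $\{21,2\}$, which index shift $j$ joins $p_{r,k}$ to $p_{s,k+j}$. I then compute the central angle $2\pi j/7+\delta_s-\delta_r$ and check that it is $\pm\pi/3$ or $\pm 2\pi/3$ modulo $2\pi$. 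For instance, $2\pi/7+\pi/21=\pi/3$. The law of cosines then reduces the brace to the two stated identities
\[
\rho_r^2+\rho_s^2-\rho_r\rho_s=1,\qquad \rho_2^2+\rho_3^2+\rho_2\rho_3=1 .
\]
I verify these by clearing denominators. Writing $s_r=\sin(r\pi/7)$, the first becomes $s_r^2+s_s^2-s_rs_s=4s_r^2s_s^2$. I express everything through $\zeta=e^{i\pi/7}$, reduce modulo the cyclotomic polynomial $\Phi_{14}$, and check that both sides agree; equivalently, I use product-to-sum formulas and the relation $\cos\frac{\pi}{7}-\cos\frac{2\pi}{7}+\cos\frac{3\pi}{7}=\frac12$. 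This step is routine but must be done for three pairs.

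\emph{Uniqueness.} Proposition~\ref{prop:rigidity} gives $\rank R=2n-3$ with $3$ self-stresses, together with redundant rigidity and global rigidity. Since $\{21,2\}$ is $3$-connected and redundantly rigid, the graph is generically globally rigid (Hendrickson, with the Connelly/Jackson--Jord\'an converse). The delicate point is that the symmetric realization is not generic, so I cannot conclude uniqueness from generic theory alone. Here I would invoke Connelly's sufficient condition: exhibit one of the three self-stresses whose stress matrix $\Omega$ is positive semidefinite of rank $n-3=18$, and check that the edge directions do not lie on a conic at infinity. The latter holds because the edges point in many directions. The stress can be taken rotationally symmetric, so $\Omega$ block-diagonalizes under the $C_7$ action into $3\times3$ blocks, which makes the semidefiniteness check tractable. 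I expect this to be the main obstacle, namely confirming the rank and positivity of $\Omega$ exactly. Failing a full proof, I would read ``unique'' as the certified statement of Proposition~\ref{prop:rigidity}.

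\emph{Field and non-constructibility.} All coordinates are $\rho_r\cos\theta$ and $\rho_r\sin\theta$ with angles in $\frac{\pi}{21}\mathbb{Z}$, so they lie in the cyclotomic field generated by $e^{i\pi/21}$. The offsets $\pm\pi/21$ bring in the factor $e^{i\pi/3}$, which is only a quadratic adjunction, so the cubic part of the field is $\Q(\cos\frac{2\pi}{7})$. The distance formula $d(1,4)^2=1+4c+4c^2$, with $c=\cos\frac{2\pi}{7}$, comes from the law of cosines on the corresponding vertex pair. The minimal polynomial of $c$ is $8x^3+4x^2-4x-1$, which is irreducible by the rational root test, so $[\Q(c):\Q]=3$. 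The element $1+4c+4c^2$ is not rational, since $1,c,c^2$ form a basis; hence it also has degree $3$.

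Squared distances between constructible points are constructible, and constructible numbers have degree a power of $2$ over $\Q$. Since $3$ is not a power of $2$, this is a contradiction, so the realization is not straightedge-and-compass constructible. On the other hand, $c$ is a root of a cubic, so one Beloch fold $O6$ produces it, and the remaining coordinates need only quadratic steps. This shows that $O6$ is both required and sufficient.
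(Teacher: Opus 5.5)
The edge-length check is the paper's own certificate and is fine. The genuine gap is uniqueness, and it is worse than an unfinished computation. You are right that Hendrickson and Jackson--Jord\'an speak only about generic frameworks; Remark~\ref{rem:superrigid} makes exactly that unjustified transfer to the symmetric realization. But Connelly's certificate cannot close the gap, because the unit framework $(G,p)$ is \emph{not} globally rigid.

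The unit pairs of the point set are exactly $(r,k)(r,k+r)$, $(1,k)(2,k+1)$, $(1,k)(3,k-1)$ and $(2,k)(3,k-2)$, with indices mod $7$. Checking the six edge classes shows that
\[
\varphi:\ (1,k)\mapsto(2,2k),\qquad (2,k)\mapsto(3,2k+3),\qquad (3,k)\mapsto(1,2k+1)
\]
is a graph automorphism. Hence $p\circ\varphi$ is again a unit-distance realization. Yet $\norm{p_{1,0}-p_{1,3}}=1+2\cos\tfrac{2\pi}{7}\approx2.247$, whereas $\norm{p_{\varphi(1,0)}-p_{\varphi(1,3)}}=\norm{p_{2,0}-p_{2,6}}=1/(2\cos\tfrac{\pi}{7})\approx0.555$. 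So $p\circ\varphi$ is a second, non-congruent labelled realization with the same edge lengths. Alternatively, any Galois automorphism of the real abelian coordinate field preserves the $42$ edge equations, and one that moves $\cos\tfrac{2\pi}{7}$ moves the squared distance $(1+2\cos\tfrac{2\pi}{7})^2$.

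Since the edges point in more than two directions, a positive semidefinite stress of rank $n-3=18$ would make $(G,p)$ universally, hence globally, rigid. So no such stress exists, and the computation you flagged as the main obstacle will simply come out negative. At best the realization is unique up to congruence \emph{and} graph automorphisms. Proving even that needs a different tool, for instance a complete solution of the $42$ unit equations by elimination, not a rigidity certificate. Your fallback of reading ``unique'' off Proposition~\ref{prop:rigidity} inherits the same false inference.

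This gap propagates. Your degree argument with $(1+2c)^2$ correctly shows that this point set, and all its relabellings and Galois conjugates, is not straightedge-and-compass constructible. But the graph-level claim that $\{21,2\}$ requires $O6$ needs every unit realization to be of this form, which is exactly the missing uniqueness modulo automorphisms.

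There are also two smaller corrections on the field.
\begin{itemize}
\item The coordinates do not lie in $\Q(e^{i\pi/21})$. The point $p_{1,0}=(\rho_1,0)$ has $\rho_1=1/(2\sin\tfrac{\pi}{7})$ of conductor $28$, and the sines need $i$. Writing $\zeta_n=e^{2\pi i/n}$, the right container is $\Q(\zeta_{84})\cap\R$, of degree $12=3\cdot4$. Your conclusion ``the cubic $\Q(\cos\tfrac{2\pi}{7})$, then quadratic steps'' survives.
\item You were right not to prove the statement's claim of exactness over $\Q(\cos\tfrac{2\pi}{7},\sin\tfrac{2\pi}{7})=\Q(\zeta_{28})\cap\R$, but you should say so explicitly rather than silently change the field. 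That field is too small: $\norm{p_{1,0}-p_{2,0}}^2=\rho_1^2+\rho_2^2-2\rho_1\rho_2\cos\tfrac{\pi}{21}$, and $\cos\tfrac{\pi}{21}\notin\Q(\zeta_{28})$. Indeed $\Q(\zeta_{28})\cap\Q(\zeta_{42})=\Q(\zeta_7)$ has real subfield of degree $3$, while $\cos\tfrac{\pi}{21}$ has degree $6$. Since squared distances are placement-independent, no congruent copy has coordinates in that field.
\end{itemize}
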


\begin{figure}[t]
\centering
\includegraphics[width=.60\textwidth]{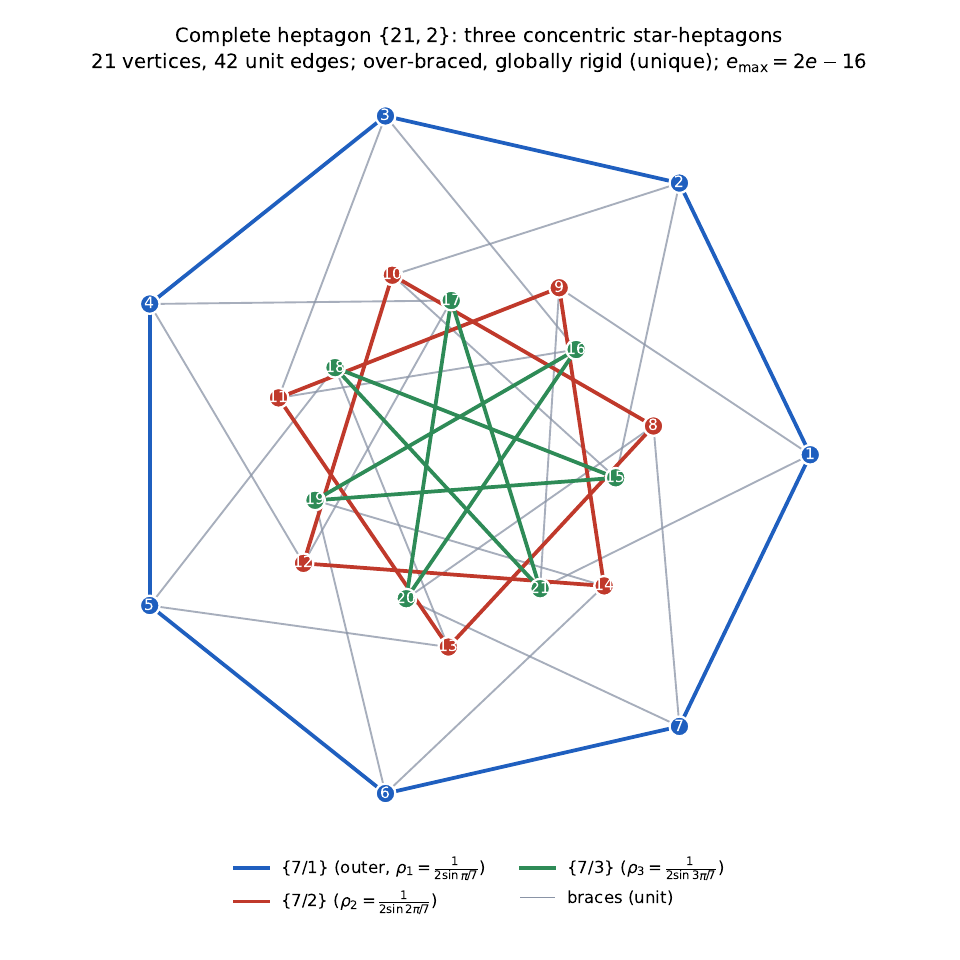}
\caption{The complete heptagon $\{21,2\}$: three concentric unit-edge star-heptagons
$\{7/1\}$ (blue), $\{7/2\}$ (red), $\{7/3\}$ (green), radii $\rho_r=1/(2\sin r\pi/7)$
and offsets $\delta_r\in\{0,\pm\pi/21\}$, joined by unit braces (grey). Over-braced
($42$ edges), globally rigid: the realization is unique. All edges certified unit,
$e_{\max}=2\times10^{-16}$ numerically and exactly by Theorem~\ref{thm:complete}.}
\label{fig:complete}
\end{figure}

\begin{remark}[The heptagon cubic versus the Delian cubic]\label{rem:cubics}
The cubic subfield $\Q(\cos\tfrac{2\pi}{7})$ has minimal polynomial
$8x^3+4x^2-4x-1$, whose discriminant $3136=56^2$ is a perfect square: its Galois
group is the \emph{cyclic} $C_3$, so it is a real, abelian (cyclotomic) cubic. It is
origami-constructible --- the Beloch fold $O6$ is necessary --- exactly as for the
classical Delian number $\sqrt[3]{2}$ of doubling the cube. The two cubics lie in the
\emph{same} origami tier yet are \emph{different} fields: $\sqrt[3]{2}$ has minimal
polynomial $x^3-2$, discriminant $-108$ (not a square), and non-abelian Galois group
$S_3$, and $\Q(\cos\tfrac{2\pi}{7})\not\ni\sqrt[3]{2}$. Thus the complete heptagon
needs a cube root of the \emph{cyclic} kind, not the Delian $\sqrt[3]{2}$.
\end{remark}

\begin{remark}[$\{21,2\}$ is globally rigid (super-rigid)]\label{rem:superrigid}
The over-bracing of $\{21,2\}$ makes it not merely rigid but \emph{globally} rigid.
Computationally, its rigidity matrix has generic rank $39=2n-3$ with $3$ self-stresses,
and it is \emph{redundantly rigid} (deleting any one of the $42$ edges preserves
$\rank R=2n-3$) and $3$-connected; by the Jackson--Jordán theorem~\cite{JacksonJordan}
it is therefore generically globally rigid, so its unit-distance realization is unique
up to congruence. This is the strong rigidity that the braced $\{35,1\}$ --- isostatic
but not redundantly rigid, hence with thousands of non-congruent realizations --- lacks.
\end{remark}

\begin{remark}[$\{35,1\}$ is $\{21,2\}$ minus two vertices]\label{rem:deletion}
The two heptagons are not merely analogous: the braced $\{35,1\}$ is obtained from the
complete $\{21,2\}$ by \emph{deleting two adjacent vertices} (both of degree $4$)
together with their incident edges --- removing $2\cdot4-1=7$ edges and passing from
$(21,42)$ to $(19,35)$; the resulting induced subgraph is isomorphic to $\{35,1\}$
(verified directly). Thus a single local deletion turns the over-braced, globally rigid
$\{21,2\}$ (cyclic-cubic field, super-rigid) into the isostatic $\{35,1\}$ (an
origami field of degree $3\cdot2^k$, thousands of non-congruent realizations; generic
count $2^{6}\cdot103\cdot587$). The complete heptagon enters here as the
\emph{antagonist} in rigidity of the braced one: they sit on opposite sides of the
rigidity divide, share the origami side of the constructibility divide, yet one is a
two-vertex deletion of the other.
\end{remark}

\section{Solving the heptagon cubic by folding}
\label{sec:lill}
The field of $\{21,2\}$ is the cyclic cubic $\Q(\cos\tfrac{2\pi}{7})$, with minimal
polynomial $8x^3+4x^2-4x-1$; by Remark~\ref{rem:cubics} it is origami- but not
straightedge-and-compass constructible. It is worth seeing \emph{how} paper folding
produces this root, since that is the mechanism behind every vertex of the
construction. Two classical tools combine: Lill's method~\cite{Lill}, which reads
the real roots of a polynomial off a right-angled path, and Beloch's
fold~\cite{Beloch,Alperin}, the single fold that performs the cubic step.

\paragraph{Lill's path.} Lay the coefficients $8,4,-4,-1$ as a right-angled path
from $O$, turning $90^\circ$ after each segment (Figure~\ref{fig:lill}, left) and
ending at $T$. A ray launched from $O$ that turns $90^\circ$ each time it meets the
next segment line and arrives at $T$ has launch slope equal to a real root: here
$-\tan\varphi=\cos\tfrac{2\pi}{7}=0.62349\ldots$, which we verify exactly, and the
three real roots $\cos\tfrac{2\pi k}{7}$ ($k=1,2,3$) are the three such rays.

\paragraph{Beloch's fold.} The ricochet has two interior right-angle turns, and a
single \emph{Beloch fold} --- the Huzita--Hatori axiom $O6$, folding two points onto
two lines simultaneously --- performs both at once (Figure~\ref{fig:lill}, right).
Equivalently $O6$ is a common tangent of two parabolas, whose slope solves a cubic:
exactly the cube-root/trisection power that straightedge and compass lack, and what
places every vertex of $\{21,2\}$ in the cyclic-cubic field. Multi-fold origami
extends this to arbitrary degree~\cite{AlperinLang}, but the heptagon needs only the
one cubic fold; and the braced $\{35,1\}$, as we show in Theorem~\ref{thm:origami35},
needs exactly the same cubic trisection followed by quadratics --- its unit
realization lives in the same origami world, despite a generic realization count that
carries the primes $103,587$.

\begin{figure}[t]\centering
\includegraphics[width=\linewidth]{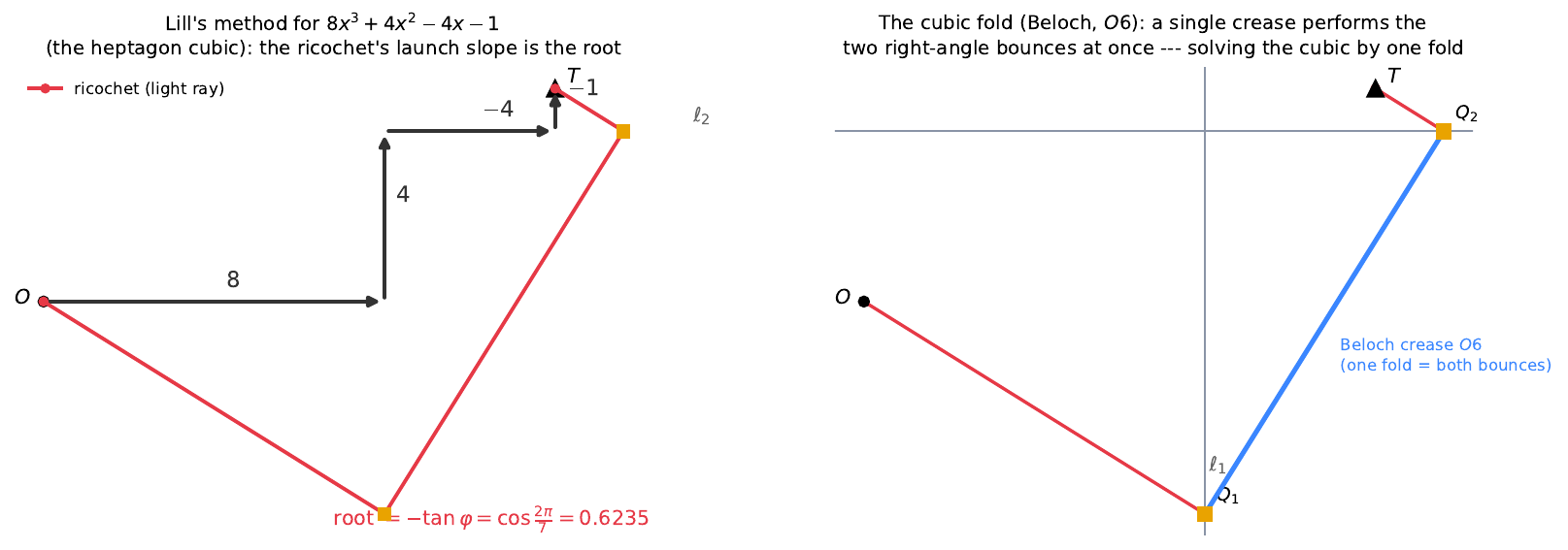}
\caption{Solving the heptagon cubic $8x^3+4x^2-4x-1$ by folding. \emph{Left:} Lill's
method --- the coefficient path $8,4,-4,-1$ (black) and the ricochet (red) whose
launch slope is the root $\cos\tfrac{2\pi}{7}=0.62349\ldots$ (verified exactly).
\emph{Right:} the two right-angle turns of the ricochet are realized by a single
Beloch fold ($O6$), the cubic-solving fold; its crease encodes the root.}
\label{fig:lill}
\end{figure}

The Beloch fold has a clean conic description that is worth a second picture: it is
a \emph{common tangent} of two parabolas --- one with focus $P_1$ and directrix
$r_1$, the other with focus $P_2$ and directrix $r_2$ --- since a line is tangent to
a parabola exactly when the reflection of the focus across the line lands on the
directrix. A pair of parabolas has up to \emph{three} real common tangents, the
three real folds, and their slopes are the three real roots of the cubic
(Figure~\ref{fig:parabolas}).

\begin{figure}[t]\centering
\includegraphics[width=.58\linewidth]{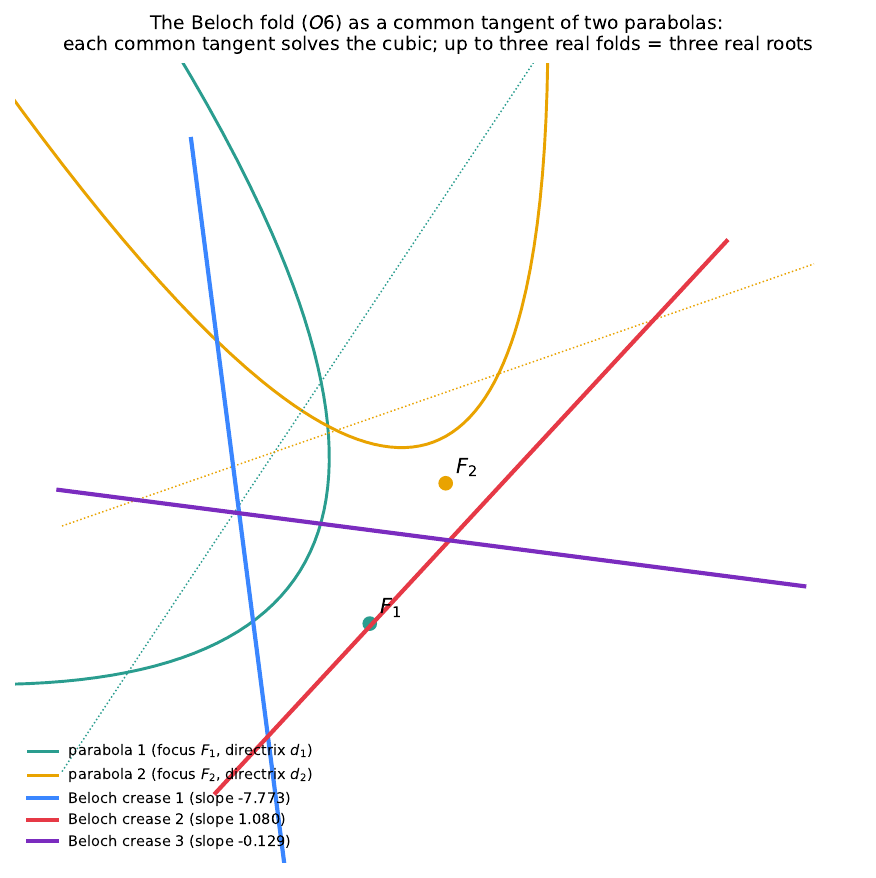}
\caption{The Beloch fold ($O6$) as a common tangent of two parabolas. A pair of
parabolas (foci $F_1,F_2$, directrices $d_1,d_2$) has up to three real common
tangents --- the three real Beloch folds --- whose slopes are the three real roots
of the cubic.}
\label{fig:parabolas}
\end{figure}

\section{The braced heptagon and the tower of fields}

The certifier builds a realization by continuity reduction: anchor a base edge;
place each triangle vertex by the circle intersection
\begin{equation}\label{eq:O5}
p_v=\tfrac{c_1+c_2}{2}\pm\sqrt{\tfrac{1}{\norm{c_2-c_1}^2}-\tfrac14}\,\bigl(-(c_2-c_1)^{\perp}\bigr)
\end{equation}
(fold $O5$, the sign being the orientation); free vertices carry parameters
$\theta$; the remaining edges give closure conditions $g_k(\theta)=0$. Each
realization is certified by Gram--SVD (planarity, edge residual, injectivity,
faithfulness). Since $\{35,1\}$ is a Laman graph, its Laman number (computed by
the recursion of~\cite{Capco}) is
\begin{equation}\label{eq:N}
N=c(\{35,1\})=3\,869\,504=2^{6}\cdot 103\cdot 587.
\end{equation}
Our fold search, refined by Newton to machine precision and deduplicated by
Procrustes, certified $\ge585$ real faithful realizations ($\approx0.015\%$ of $N$);
Figure~\ref{fig:braced-real} shows twenty-four of them.

\begin{figure}[t]
\centering
\includegraphics[width=\textwidth]{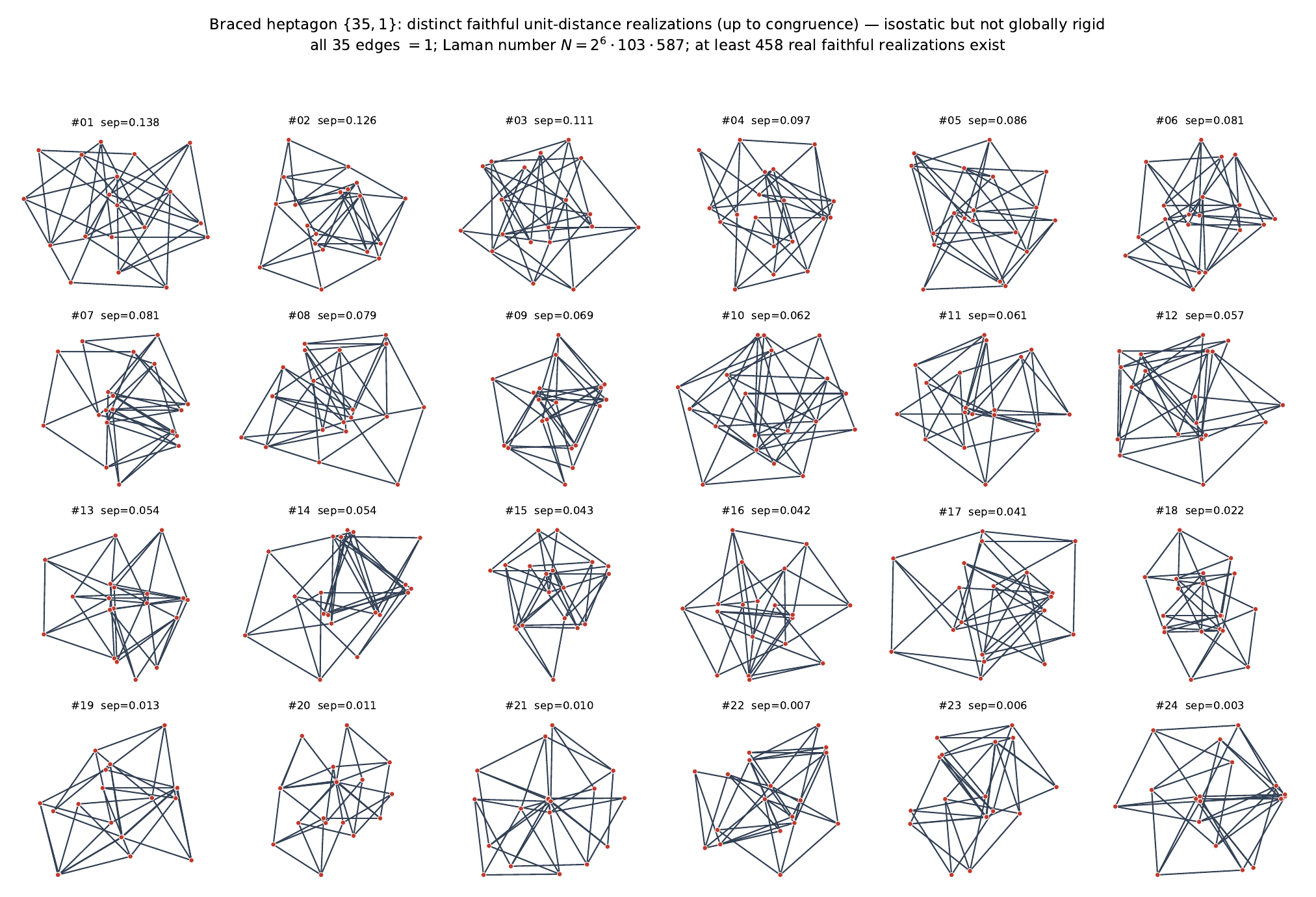}
\caption{Twenty-four distinct faithful unit-distance realizations of the braced
heptagon $\{35,1\}$, up to congruence (each panel labelled by index and minimum vertex
separation). All $35$ edges are unit in every panel; the graph is isostatic yet not
globally rigid, so its realization is far from unique --- a visual face of the Laman
number $N=2^{6}\cdot103\cdot587$. Compare the single, rigid realization of the complete
$\{21,2\}$ in Figure~\ref{fig:complete}.}
\label{fig:braced-real}
\end{figure}

\begin{theorem}[Tower of fields]\label{thm:tower}
Let $p$ be a faithful realization of $\{35,1\}$ with base anchored, $K=\Q(p)$.
There is a tower $\Q\subseteq L\subseteq K$ with $L=\Q(\theta_1,\theta_2,\theta_3)$,
$[L:\Q]=d$, and $K=L(\sqrt{f_1},\dots,\sqrt{f_s})$, $f_v\in L$, so $[K:L]=2^s$.
Hence $[K:\Q]=2^s d$, and the odd part of $[K:\Q]$ divides $d$: the field is
governed by the closure system in the free angles, not by the $O5$ folds
(Theorem~\ref{thm:origami35} evaluates $d=6$ for the unit realization).
\end{theorem}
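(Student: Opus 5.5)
The plan is to read the tower off the certifier's construction itself. Anchoring the base edge fixes two vertices with rational coordinates. The construction then proceeds vertex by vertex. Each \emph{free} vertex is placed at unit distance from an already-placed vertex along a direction parametrized by a free angle. We take the parameters $\theta_1,\theta_2,\theta_3$ to mean the algebraic quantities $(\cos\theta_i,\sin\theta_i)$ rather than the angles themselves, and define $L$ as the field these generate over $\Q$. Each remaining vertex is a triangle vertex placed by the circle intersection \eqref{eq:O5}. The first step is an induction along the construction order $v_1,\dots,v_n$. Let $K_j$ be the field generated by $L$ and the coordinates of $v_1,\dots,v_j$. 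A free vertex adds nothing beyond $L$. A triangle vertex built from centers $c_1,c_2\in K_{j}^2$ lies in $K_j(\sqrt{g})$ with
\[
g=\frac{1}{\norm{c_2-c_1}^2}-\frac14\in K_j,
\]
because $(c_1+c_2)/2$ and $(c_2-c_1)^{\perp}$ already have coordinates in $K_j$. Hence $K=K_n$ is obtained from $L$ by adjoining successive square roots.

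The second step arranges the radicands so that they lie in $L$ itself, as the statement demands with $f_v\in L$. The general statement one gets directly is only that $K$ sits atop a tower of quadratic steps over $L$. I would handle this in one of two ways. The first is to observe that for the braced heptagon the triangle vertices hang off the free/base skeleton in a single layer. In that case each pair of centers $c_1,c_2$ already lies in $L^2$, so each radicand lies in $L$. The alternative is to accept nested radicals and note that every step has degree $1$ or $2$, so the degree is still $[K:L]=2^s$ with $s$ the number of nontrivial steps. Either way, discarding the steps where $\sqrt{g}$ is already present gives $[K:L]=2^s$. I expect this to be the main obstacle, because $[L(\sqrt{f_1},\dots,\sqrt{f_s}):L]$ is $2^s$ only after pruning dependent radicands. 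I would state the result with $s$ defined as the number of independent square-root classes in $L^\times/L^{\times2}$, or equivalently as the length of the pruned tower.

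The third step is the tower law from the definitions section: $[K:\Q]=[K:L][L:\Q]=2^s d$. For the odd-part claim, write $d=2^a d'$ with $d'$ odd. Then the odd part of $2^{s}d$ is $d'$, which divides $d$. Equivalently, every odd prime dividing $[K:\Q]$ already divides $[L:\Q]$. This makes precise the slogan that the arithmetic is governed by the closure system: the $\theta_i$ are constrained only by the closure equations $g_k(\theta)=0$, and $d$ is the degree of the corresponding point of that zero-dimensional variety. We should also check that $d$ is finite, i.e.\ that $L$ is algebraic. This holds because $p$ is an isolated solution of a polynomial system with rational coefficients, since the framework is rigid (Proposition~\ref{prop:rigidity}). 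Its coordinates are therefore algebraic, and so are the $\theta_i$, which are recovered as coordinate differences of $p$ so that $L\subseteq K$. The evaluation $d=6$ is deferred to Theorem~\ref{thm:origami35}, so nothing further is needed here.
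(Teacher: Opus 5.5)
Your proposal is correct and follows the paper's proof: each $O5$ step adjoins a square root of an element of the field generated so far, which is all the paper's proof actually gives (rather than $f_v\in L$). Hence $K/L$ is $2$-primary, and the tower law yields $[K:\Q]=2^s d$ with $s$ the length of the pruned tower, which is your second option; there $s$ should not be described as a count of classes in $L^\times/L^{\times2}$, since the radicands need not lie in $L$.

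Drop your first option, however. $\{35,1\}$ is an induced subgraph of the $4$-regular $\{21,2\}$, so the two base and three free vertices have at most $20$ edge-ends between them. Supplying both centres for all $14$ triangle vertices would need $28$, so some triangle vertices must be built on earlier ones, exactly as in Proposition~\ref{prop:mv}.
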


\begin{proof}
By~\eqref{eq:O5} each triangle vertex adjoins $\sqrt{f_v}$ with $f_v$ in the field
generated so far, so $K/L$ is $2$-primary. The parameters satisfy $g_k(\theta)=0$;
clearing the radicals (introducing branch signs) yields a polynomial system whose
solution has degree $d$. Multiplicativity gives $[K:\Q]=2^s d$, and every odd
prime of $[K:\Q]$ divides $d$.
\end{proof}

The step from the \emph{count} $N$ to the coordinate field of a \emph{unit}
realization is where care is needed: $N$ counts the complex realizations of
$\{35,1\}$ under \emph{generic} edge lengths, whereas the unit realization is a
highly non-generic specialization. Pegg's explicit construction pins that
specialization down exactly. Anchoring $A=0$, $B=1$ and writing the three free
parameters as $a_1,a_2,a_3$, the braced regular heptagon is the solution
\begin{equation}\label{eq:pegg12}
a_1=\tfrac{5\pi}{7},\quad a_3=\tfrac{2\pi}{7},\quad a_2=\arg\zeta,\ \
7\zeta^{12}+28\zeta^{11}+70\zeta^{10}+133\zeta^{9}+203\zeta^{8}+259\zeta^{7}
+281\zeta^{6}+259\zeta^{5}+203\zeta^{4}+133\zeta^{3}+70\zeta^{2}+28\zeta+7=0,
\end{equation}
at which the three unused edges close exactly, $d(11,12)=d(13,14)=d(15,2)=1$. The
degree-$12$ polynomial in \eqref{eq:pegg12} is irreducible and palindromic; its
Chebyshev reduction $w=\zeta+\zeta^{-1}=2\cos a_2$ satisfies the irreducible sextic
\begin{equation}\label{eq:sextic}
S(w)=7w^{6}+28w^{5}+28w^{4}-7w^{3}-14w^{2}+1 .
\end{equation}

\begin{theorem}[The braced heptagon is origami-constructible]\label{thm:origami35}
Over the cyclic cubic $F=\Q(\cos\tfrac{2\pi}{7})$ the sextic \eqref{eq:sextic}
factors into three (Galois-conjugate) quadratics; hence $2\cos a_2$ has degree $2$
over $F$, and the angle field $L=\Q(\cos\tfrac{2\pi}{7},\cos a_2)$ is obtained from
$\Q$ by the cyclic-cubic trisection followed by a single quadratic,
$[L:\Q]=6=2\cdot3$. With Theorem~\ref{thm:tower} the coordinate field of the unit
realization is then a $\{2,3\}$-tower, $[K:\Q]=3\cdot2^{k}$; every step is realized
by an $O5$ or $O6$ fold, so $\{35,1\}$ is \emph{origami-constructible}.
\end{theorem}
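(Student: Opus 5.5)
The plan is to exhibit the factorization over $F$ explicitly, certify it by exact arithmetic in $F$, and then read off the degrees from the tower law.

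\textbf{Step 1 (explicit factor).} Write $c=2\cos\tfrac{2\pi}{7}$. Then $x=c/2$ being a root of $8x^3+4x^2-4x-1$ gives $c^3+c^2-2c-1=0$. Its conjugates are $\sigma(c)=c^2-2$ and $\sigma^2(c)=2-c-c^2$, where $\sigma$ generates $\Gal(F/\Q)\cong C_3$ (Remark~\ref{rem:cubics}). I would make the ansatz
\[
Q_c(w)=w^2+\alpha w+\beta,\qquad \alpha=a_0+a_1c+a_2c^2,\quad \beta=b_0+b_1c+b_2c^2,\quad a_i,b_i\in\Q .
\]
The target identity is $Q_c\,Q_{\sigma(c)}\,Q_{\sigma^2(c)}=S/7$. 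Equivalently, the remainder of $S(w)$ on division by $Q_c(w)$, reduced modulo $c^3+c^2-2c-1$, must vanish. That remainder has two coefficients in $F$, giving six rational equations in six unknowns.

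In practice I would not solve this system by hand. Instead I would obtain the factor from a number-field factorization (Trager's algorithm, e.g.\ \texttt{nffactor}) and then \emph{certify} it. The certificate is to expand $Q_cQ_{\sigma(c)}Q_{\sigma^2(c)}$ symbolically and reduce the coefficients modulo the minimal polynomial. Being symmetric in the conjugates, these coefficients must collapse to rationals, and one checks that they equal those of $S/7$. This is a finite exact computation. I expect finding the right $\alpha,\beta$ to be the main obstacle. A useful first handle: the constant terms satisfy $N_{F/\Q}(\beta)=1/7$. Since $7$ is totally ramified in $F$, $7=\mathfrak p^3$, so $\beta$ should be a unit times an inverse generator of $\mathfrak p$, such as $(2-c)^{-1}$ up to units. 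That narrows the search considerably.

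\textbf{Step 2 (degree count).} Let $w_0=2\cos a_2$, a root of $Q_c$. Since $S$ is irreducible over $\Q$, we have $[\Q(w_0):\Q]=6$. The factorization gives $[F(w_0):F]\le 2$, hence
\[
6\le[F(w_0):\Q]\le 3\cdot2=6 .
\]
So equality holds, $Q_c$ is irreducible over $F$, and $L=F(w_0)$ has degree $6$. The factorization also shows $\Q(w_0)=L\supseteq F$. The three quadratics are Galois-conjugate by construction, since $\sigma$ permutes the coefficient sets. Passing from $w_0$ to $\zeta=e^{ia_2}$, and from cosines to sines for $a_1=\tfrac{5\pi}{7}$, $a_3=\tfrac{2\pi}{7}$, adjoins only the square roots $\sqrt{w_0^2-4}$ and $\sqrt{4-c^2}$. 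This is consistent with the palindromic degree-$12$ relation \eqref{eq:pegg12}.

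\textbf{Step 3 (origami).} Combine this with Theorem~\ref{thm:tower}. The coordinate field $K$ is obtained from the angle field by successive square roots $\sqrt{f_v}$ via \eqref{eq:O5}, together with the sine square roots above. Hence $[K:\Q]=3\cdot2^k$, built as one cyclic cubic step followed by quadratic steps. The cubic step $F/\Q$ is performed by a single Beloch fold $O6$, as in Section~\ref{sec:lill} via Lill's path for $8x^3+4x^2-4x-1$. Every quadratic step, including adjoining $w_0$ as a root of $Q_c$ with coefficients already constructed, is an $O5$ fold or a straightedge-and-compass operation. Therefore all vertex coordinates are origami numbers.
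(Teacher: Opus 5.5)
Your proposal is correct and is essentially the paper's proof: an exact factorization of $S$ over $F$, the $C_3$-conjugacy of the three factors, the tower law, and then Theorem~\ref{thm:tower} with Alperin's characterization. Your degree count via irreducibility of $S$ over $\Q$ is a bit tighter than the paper's terse ``thus'', since it also gives irreducibility of $Q_c$ over $F$ and $F\subseteq\Q(w_0)$. There is one slip to fix before running the certificate: from $c+\sigma(c)+\sigma^2(c)=-1$ the third conjugate is $\sigma^2(c)=1-c-c^2$, not $2-c-c^2$. With that correction, the factor you are looking for is $Q_c(w)=w^2-(c-1)w-\tfrac{5+2c-4c^2}{7}$, where $\tfrac{5+2c-4c^2}{7}=\tfrac{1}{3+2c}$ and $3+2c$ generates the prime above $7$, exactly as your norm heuristic predicted.
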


\begin{proof}
Adjoining a root of $8c^{3}+4c^{2}-4c-1$ (the minimal polynomial of
$\cos\tfrac{2\pi}{7}$) to $\Q$ splits $S$ into three quadratic factors, verified in
exact arithmetic; the factors are permuted by $\mathrm{Gal}(F/\Q)=C_3$. Thus
$2\cos a_2$ generates a quadratic extension of $F$ and $[L:\Q]=6$. By
Theorem~\ref{thm:tower}, $K=L(\sqrt{f_1},\dots,\sqrt{f_s})$ is $2$-primary over $L$,
so $[K:\Q]=6\cdot2^{s}=3\cdot2^{k}$. A field reached by a tower of quadratic and
cubic steps is exactly the field of origami numbers~\cite{Alperin}: $O6$ performs
the single cubic (trisection) step and $O5$ each quadratic.
\end{proof}

\begin{remark}[The primes $103,587$ are the generic count, not the field]\label{rem:count-not-field}
Theorem~\ref{thm:origami35} corrects the natural but wrong inference that the
coordinate field has degree $N$. The number $N=2^{6}\cdot103\cdot587$ counts the
complex realizations of $\{35,1\}$ under \emph{generic} edge lengths, and the
monodromy of Remark~\ref{rem:monodromy} shows that generic variety is irreducible.
The \emph{unit} realization is a non-generic specialization whose arithmetic is
governed by \eqref{eq:sextic}: an origami field of degree $3\cdot2^{k}$ in which
$103$ and $587$ play no role. This puts $\{35,1\}$ on the same footing as the
triangle-free Exoo--Ismailescu graphs $EI_{17}$ and $EI_{19}$, whose closure
polynomials (degrees $20$ and $12$) are likewise fully solvable step by step: the
braced heptagon is certified exactly by the same geometry, not left to numerics.
\end{remark}

\begin{remark}[Monodromy evidence for irreducibility; a Type-2 graph beyond the
known Galois theorem]\label{rem:monodromy}
The irreducibility of the \emph{generic} realization variety of $\{35,1\}$ (a
question independent of the unit specialization of Theorem~\ref{thm:origami35}) is not
settled by the available Galois-group theorem for minimally rigid graphs: Makhul,
Schicho and
Warren~\cite{MakhulSchichoWarren} compute the Galois group of every \emph{Type-1}
graph (built from a single edge by pure Henneberg-1 vertex additions), but
$\{35,1\}$ has minimum degree $3$ and no degree-$2$ vertex, so it admits \emph{no}
Henneberg-1 reduction: it is a \emph{Type-2} graph, outside their reach, and its
irreducibility is a priori open (cf.~\cite{DistanceComponents}). We test it
numerically. Over generic complex edge lengths we track a single realization by
segment homotopy around random loops in the length parameters; each loop permutes
the realizations (monodromy). From one realization, $3638$ loops connect
$\ge2252$ distinct realizations, and the orbit grows \emph{linearly, with no
plateau} (Figure~\ref{fig:monodromy}). A reducible variety with the seed trapped in
a small component would have saturated at a proper divisor of $N$; the unbounded
linear growth is instead the signature of a transitive monodromy, i.e.\ an
irreducible variety. This is strong evidence --- not a proof (the sample reaches
$0.06\%$ of $N$) --- that the \emph{generic} realization variety is irreducible over
$\Q$. It concerns generic edge lengths only; consistently, the \emph{unit} heptagon
is a non-generic specialization whose real faithful realizations lie in the origami
field of Theorem~\ref{thm:origami35} (degree $3\cdot2^{k}$), so the primes
$103,587$ are a property of the generic realization variety, not of any single unit
realization.
\end{remark}

\begin{figure}[t]\centering
\includegraphics[width=.66\textwidth]{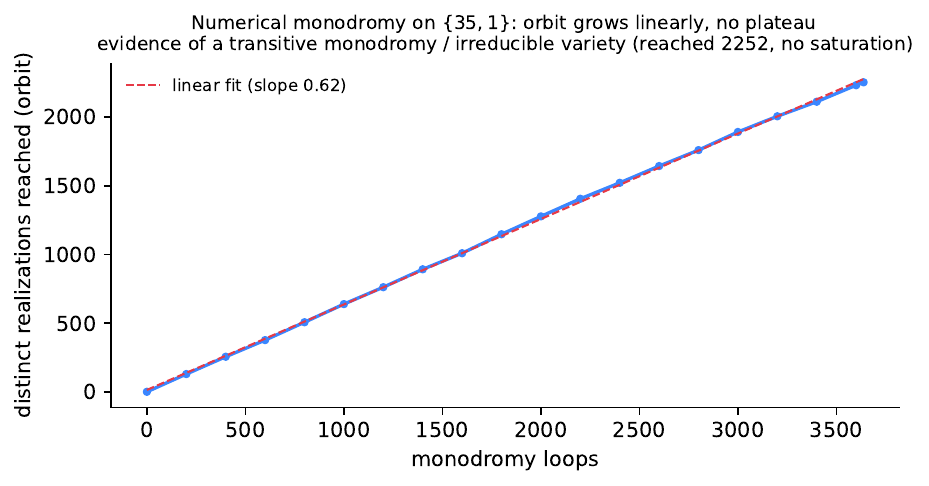}
\caption{Numerical monodromy on $\{35,1\}$ over generic complex lengths: the number
of distinct realizations reached from a single seed grows linearly in the number of
loops, with no saturation ($\ge2252$ reached). The absence of a plateau at a divisor
of $N=2^6\cdot103\cdot587$ is evidence of a transitive monodromy --- an irreducible
\emph{generic} realization variety --- whose unit specialization is the origami
field of Theorem~\ref{thm:origami35}.}
\label{fig:monodromy}
\end{figure}

\begin{table}[t]
\centering\small
\caption{The two heptagons: opposite in rigidity, kin in arithmetic. The generic
count $N$, the origami field of $\{35,1\}$, and the $\{21,2\}$ column are all
unconditional.}
\begin{tabular}{@{}lll@{}}
\toprule
 & $\{35,1\}$ (braced) & $\{21,2\}$ (complete)\\
\midrule
$n,\ |E|$          & $19,\ 35$                        & $21,\ 42$\\
rigidity           & isostatic ($D_g=0$)              & over-braced ($D_g=-3$)\\
complex realizations (generic) & $N=2^{6}\cdot103\cdot587$ & $1$ (unique)\\
construction       & origami: $O6$ then $O5$          & origami: $O6$\\
coordinate field   & $\{2,3\}$-tower, $[K:\Q]=3\cdot2^{k}$ & $\Q(\cos\tfrac{2\pi}{7},\sin\tfrac{2\pi}{7})$\\
\bottomrule
\end{tabular}
\end{table}

\section{The two heptagons and the field of origami numbers}
\label{sec:origami-numbers}
Alperin~\cite{Alperin} showed that the numbers constructible by Huzita--Hatori
folds form a field $\mathbb{O}$, the \emph{origami numbers}, lying strictly between
the ruler-and-compass field and the algebraic numbers and characterised by closure
under square \emph{and} cube roots: $\alpha\in\mathbb{O}$ iff $\alpha$ lies in a
tower $\Q=K_0\subset K_1\subset\cdots\subset K_r\ni\alpha$ with each
$[K_{i+1}:K_i]\in\{2,3\}$ --- equivalently, the Galois closure of $\Q(\alpha)$ is a
$\{2,3\}$-group. Both heptagons fall \emph{inside} this field, which is the arithmetic
kinship beneath their rigidity antagonism. For $\{21,2\}$ the coordinates lie in
$\Q(\cos\tfrac{2\pi}{7},\sin\tfrac{2\pi}{7})$, reached by one cubic step (the cyclic
cubic, $C_3$) and one quadratic: \emph{every vertex is an origami number}, and the
Beloch fold of Section~\ref{sec:lill} constructs it. For $\{35,1\}$ the same picture
holds, one layer deeper. By Theorem~\ref{thm:origami35} the pinning angle satisfies
the sextic \eqref{eq:sextic}, which \emph{factors into quadratics} over the very cubic
$\Q(\cos\tfrac{2\pi}{7})$; so the coordinate field is a tower of one $C_3$ (trisection)
step and quadratics, of degree $3\cdot2^{k}$, and \emph{every vertex is again an
origami number}, built by the Beloch fold followed by $O5$ folds. The primes $103$ and
$587$ appear only in the generic complex count $N$, not in this tower --- a
$\{2,3\}$-tower yields exactly the degrees $2^a3^b$, and $3\cdot2^k$ is one of them.
Thus the over-braced, globally rigid heptagon and its isostatic two-vertex deletion
live on the \emph{same} side of $\mathbb{O}$; neither needs the multi-fold origami of
Alperin and Lang~\cite{AlperinLang}. Both sides are unconditional.

\section{Branch signs are mountain--valley assignments}
\label{sec:mv}

In the continuity reduction above, the base edge is anchored, three vertices carry
the free angles $\theta_1,\theta_2,\theta_3$, and the remaining $t=14$ vertices are
placed by the fold $O5$ of~\eqref{eq:O5}. Such a fold is the meeting of two unit
circles centred at already-placed neighbours $c_1,c_2$; formula~\eqref{eq:O5} exhibits
its \emph{two} solutions, mirror images of one another across the \emph{radical axis}
$\ell_v$ of the two circles --- the perpendicular bisector of $c_1c_2$ (equal radii),
which is the support of their common chord. Collect the $t$ orientation signs into a
vector $\varepsilon\in\{+,-\}^{t}$.

\begin{definition}[Crease and mountain--valley sign]\label{def:mv}
For the fold $O5$ placing $v$ from $c_1,c_2$, the radical axis $\ell_v$ is the
\emph{crease} of $v$, and the two solutions $p_v^{\pm}$ of~\eqref{eq:O5} are reflections
of each other across $\ell_v$. Declaring $\varepsilon_v=+$ a \emph{mountain} fold and
$\varepsilon_v=-$ a \emph{valley} fold, the triangle $c_1c_2v$ is laid to one side of
$\ell_v$ or to its mirror side. Thus $\varepsilon$ is a mountain--valley (M/V)
assignment to the $t$ creases.
\end{definition}

\begin{proposition}[Branch signs $=$ M/V calculus]\label{prop:mv}
Fix the free angles $\theta_1,\theta_2,\theta_3$. The map $\varepsilon\mapsto
p(\varepsilon)$ from sign vectors to configurations is the assignment of an M/V label to
each crease $\ell_v$; flipping a single $\varepsilon_v$ reflects the sub-framework beyond
$\ell_v$ across that crease (a local mountain$\leftrightarrow$valley reversal), producing
a \emph{non-congruent} framework, not an isometry of the whole. Hence the naive number
of configurations is $2^{t}=2^{14}$, cut down to those satisfying the closure system
$g_1=g_2=g_3=0$ --- the three unused edges forced to unit length.
\end{proposition}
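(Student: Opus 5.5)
The plan is to make the continuity reduction explicit as an ordered construction and read each claim of Proposition~\ref{prop:mv} off formula~\eqref{eq:O5}. Fix the anchored base edge and the free angles $\theta_1,\theta_2,\theta_3$. Order the $t=14$ triangle vertices $v_1,\dots,v_t$ so that each $v_i$ has two neighbours $c_1,c_2$ among the base, the free vertices and $v_1,\dots,v_{i-1}$. This is the order the certifier already uses. By induction on $i$, each choice $\varepsilon\in\{+,-\}^t$ determines $p(\varepsilon)$ uniquely, wherever \eqref{eq:O5} is real, i.e.\ $\norm{c_2-c_1}<2$. The two roots $p_v^{\pm}=\tfrac{c_1+c_2}{2}\pm h\,(-(c_2-c_1)^{\perp})$ differ only in the sign of the component along $(c_2-c_1)^{\perp}$. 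They are therefore mirror images across the line through $c_1,c_2$. Here I take this line as the crease $\ell_v$ of Definition~\ref{def:mv}, since it is the axis swapping the two circle intersections. Labelling $\pm$ as mountain/valley is then a bijection between sign vectors and M/V assignments, which gives the first claim.

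For the flipping claim, define the \emph{descendants} $D(v)$ of $v$. These are $v$ together with every later vertex whose placement depends, through the chain of circle centres, on $v$. Let $\sigma$ be the reflection in $\ell_v$. Flipping $\varepsilon_v$ replaces $p_v$ by $\sigma(p_v)$ and leaves $c_1,c_2$ fixed, because they lie on $\ell_v$. I will prove by induction along the order that every $w\in D(v)$ is then moved to $\sigma(p_w)$. The point is that \eqref{eq:O5} is equivariant under isometries, provided the orientation sign is also reversed when the isometry reverses orientation. A reflection does reverse orientation, so in the literal sign convention the descendants' signs change too. I will state the proposition in the intrinsic form: ``same M/V side relative to their own creases''. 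In that form the sub-framework $D(v)$ is reflected rigidly while the rest stays fixed.

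For non-congruence I need that $D(v)$ together with the fixed part is not carried onto the original by any isometry. Suppose an isometry $\phi$ did this. Then $\phi$ fixes the base edge and at least one point off $\ell_v$ (a free vertex not in $D(v)$), so $\phi$ is the identity. That forces $p_v=\sigma(p_v)$, i.e.\ $p_v\in\ell_v$, which contradicts $h>0$ for a nondegenerate triangle. The count is then immediate. There are at most $2^t=2^{14}$ sign vectors for each $\theta$. A sign vector gives a unit realization of all $35$ edges exactly when the three edges not used in the construction have length $1$, which is the system $g_1=g_2=g_3=0$.

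The main obstacle is the descendant bookkeeping. One issue is the sign-convention subtlety just noted. The other is that some vertex outside $D(v)$ might still be adjacent to a vertex of $D(v)$ through one of the three unused edges. In that case the flip can break closure, but this only affects whether $g_k=0$ holds, not the construction map. I would first check the 14-step ordering explicitly to confirm that the complement of each $D(v)$ contains a point off $\ell_v$.
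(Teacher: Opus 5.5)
Your route is the paper's: place the $14$ vertices in order by \eqref{eq:O5}, flip one sign, argue that the part downstream of $v$ is mirrored, and let $g_1=g_2=g_3=0$ cut down the $2^{14}$ sign vectors. Two of your refinements are right and should be kept. First, the roots $p_v^{\pm}$ are mirror images across the line $c_1c_2$, not across the radical axis named in Definition~\ref{def:mv}: both roots lie on that perpendicular bisector, so reflection in it fixes them. Second, a reflection reverses the orientation convention of \eqref{eq:O5}, so a literal single-sign flip does not act as a rigid reflection on the descendants. Your non-congruence step is also sound. It only uses $p'_v=\sigma(p_v)\neq p_v$ and the fact that every non-descendant, including $c_1,c_2$, stays put.

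The gap is the induction asserting that every $w\in D(v)$ moves to $\sigma(p_w)$. Equivariance of \eqref{eq:O5} applies to $w$ only if \emph{both} of its circle centres are moved by $\sigma$, i.e.\ each lies in $D(v)$ or on $\ell_v$. Your $D(v)$ is closed under children, not under parents. A descendant $w$ can be placed from $a\in D(v)$ and $b\notin D(v)$, and for $b\notin\{c_1,c_2\}$ one generically has $p_b\notin\ell_v$.

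After the flip, $w$ is an intersection of the unit circles about $\sigma(p_a)$ and $p_b$, rather than about $\sigma(p_a)$ and $\sigma(p_b)$. Now $\norm{\sigma(p_a)-p_b}=\norm{p_a-\sigma(p_b)}$, and this differs from $\norm{p_a-p_b}$ unless $p_a$ or $p_b$ lies on $\ell_v$. So the triangle $abw$ changes shape, $p'_w\neq\sigma(p_w)$, and the intersection may even stop being real. Likewise, a free vertex placed downstream of $v$ at a fixed angle $\theta$ is translated, not reflected.

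So the rigid-reflection claim needs $D(v)$ to hang from the rest of the \emph{construction} graph through $c_1,c_2$ alone. You must either verify this for every $v$ in the actual $14$-step order, or weaken the conclusion to: the flip sends $p_v$ to $\sigma(p_v)$, fixes all non-descendants, and re-solves the descendants. The obstacle you flag, the three unused edges, is not what breaks the argument. The paper's proof simply asserts that the vertices placed from $v$ are reflected, so it does not supply this step either.
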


\begin{proof}
Each triangle vertex has exactly the two pre-images $p_v^{\pm}$ of~\eqref{eq:O5},
interchanged by reflection in $\ell_v$; with the angles fixed, choosing $\varepsilon_v$
determines $p_v$ and, downstream, every vertex built from it, so $p(\varepsilon)$ is
well defined on $\{+,-\}^{t}$. A single reversal $\varepsilon_v\mapsto-\varepsilon_v$
replaces $p_v$ by its mirror across $\ell_v$ and reflects the vertices placed from it;
the resulting framework shares the $35$ edge lengths but has a different distance
multiset, hence is not congruent. The closure edges hold only on the zero set of the
$g_k$, which selects the consistent assignments.
\end{proof}

\begin{remark}[Closure $=$ foldability consistency]\label{rem:foldability}
The closure equations $g_k(\theta)=0$ play, for the UDG, the role that Maekawa's
condition ($\#M-\#V=\pm2$ at each interior vertex) and Kawasaki's condition
(alternating angle sum $=\pi$) play for a flat-foldable crease pattern~\cite{Hull}:
local binary fold choices, constrained by a global consistency that decides which
assignments actually close. A realization is \emph{faithful} exactly when the folded
configuration has no overlap producing a phantom unit distance --- the UDG analogue of
a valid, non-self-intersecting folded state.
\end{remark}

\begin{remark}[The realizable M/V patterns: an empirical enumeration]\label{rem:mv-empirical}
A direct search over the branch signs makes the sieve concrete. Sampling the closure
system and keeping only faithful states, we observe two facts. First, the branch-sign
vector $\varepsilon\in\{+,-\}^{14}$ \emph{determines} the realization: distinct faithful
realizations carry distinct M/V patterns, a bijection between realizations and
realizable patterns. Second, the realizable patterns occupy a tiny, structured part of
the $2^{14}=16384$ a priori choices. Writing $M=\#\{v:\varepsilon_v=+\}$ for the number
of mountains, the realizable patterns are confined to the band $M\in[2,13]$ ---
\emph{never} all-mountain or all-valley --- with a unimodal distribution peaked near
$M=7$ (Figure~\ref{fig:mv-hist}). This mountain--valley \emph{balance} is the
unit-distance shadow of Maekawa's $\#M-\#V=\pm2$: closure without a phantom fold forces a
near-even split of mountains and valleys. Figure~\ref{fig:mv-gallery} shows realizations
across the band, from $M=2$ to $M=13$. The $\ge585$ distinct faithful realizations
certified this way are still far below the complex total $N=2^6\cdot103\cdot587$, which
counts \emph{all} consistent M/V states over $\mathbb{C}$.
\end{remark}

\begin{figure}[t]\centering
\includegraphics[width=.72\textwidth]{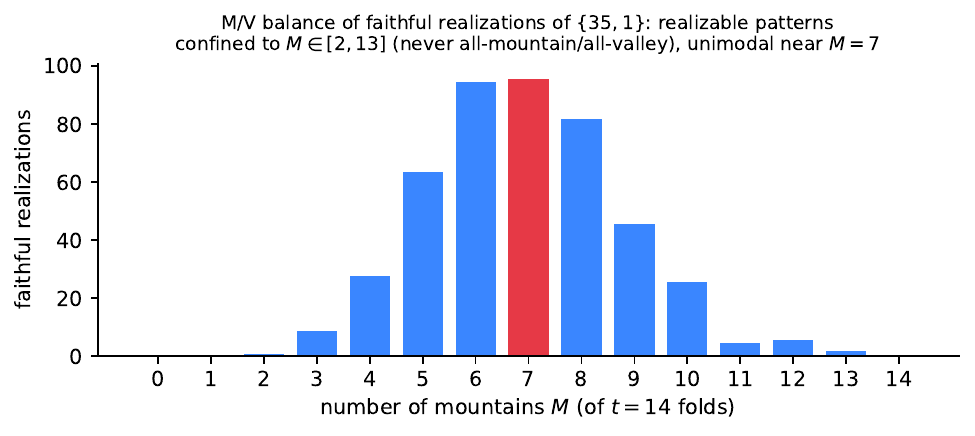}
\caption{Mountain--valley balance of the faithful realizations of $\{35,1\}$: the number
of mountains $M$ among the $t=14$ branch signs. Realizable patterns are confined to
$M\in[2,13]$ (never all-mountain or all-valley) and cluster near $M=7$ --- a
Maekawa-type balance forced by closure.}
\label{fig:mv-hist}
\end{figure}

\begin{figure}[t]\centering
\includegraphics[width=\textwidth]{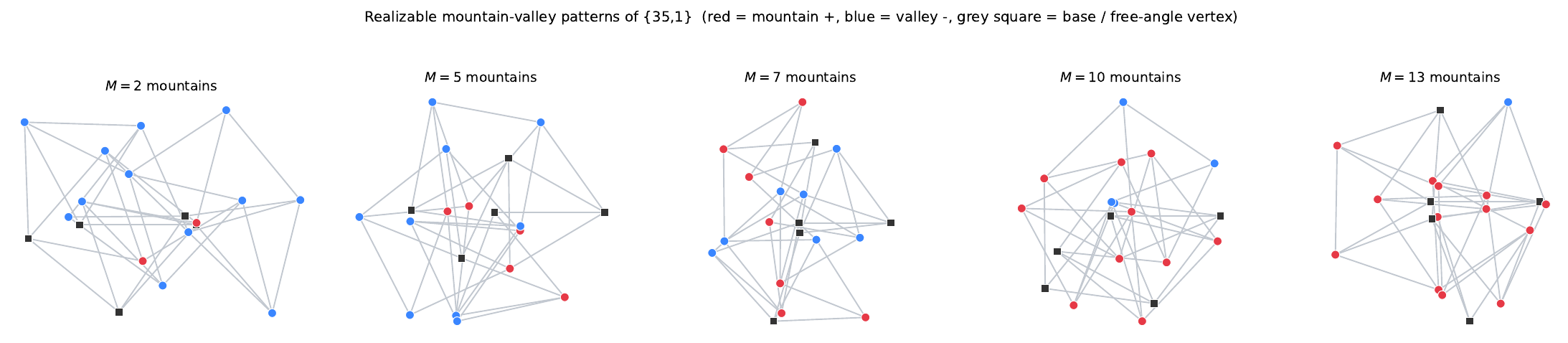}
\caption{Realizable M/V patterns across the band: faithful realizations of $\{35,1\}$
with $M=2,5,7,10,13$ mountains. Fold vertices are coloured red (mountain, $\varepsilon_v=+$)
or blue (valley, $\varepsilon_v=-$); grey squares mark the base edge and the three
free-angle vertices. Each realization is a distinct point of the closure variety, and
its M/V pattern is unique to it.}
\label{fig:mv-gallery}
\end{figure}

Complexifying and quotienting by isometry counts \emph{all} consistent M/V states over
generic lengths: this is the Laman number $N=2^{6}\cdot103\cdot587$ of~\eqref{eq:N}. The
real faithful realizations (the $\ge585$ certified) are the states with real coordinates
and no phantom edge. The Galois group $\Gal(K/\Q)$ of the coordinate field of the
\emph{unit} realization permutes these states; by Theorem~\ref{thm:origami35} that field
is the origami tower of degree $3\cdot2^k$, so $\Gal(K/\Q)$ is a $\{2,3\}$-solvable
group --- the primes $103,587$ live in the generic count $N$, not in the folded shape.
This is why the thousands of heptagon foldings, though far from unique, remain
\emph{ordinary} in the deepest sense: each is constructed by a trisection followed by
straightedge-and-compass folds, Galois conjugates inside Alperin's field of origami
numbers.

\begin{remark}[From correspondence to theorem]\label{rem:towards-theorem}
Proposition~\ref{prop:mv} and Remark~\ref{rem:foldability} give a \emph{structural}
correspondence --- a binary reflection at each crease, plus a global consistency ---
matching the mountain--valley calculus of origami. To promote it to a theorem one must
exhibit a crease pattern $\mathcal{C}$ whose rigidly foldable states are in natural
bijection with the faithful realizations of $\{35,1\}$, under which $\varepsilon$ is the
M/V map and the closure system $g_k=0$ coincides with the foldability (angle
consistency) equations of $\mathcal{C}$; one then reads the Galois action on
realizations as a symmetry of the M/V state space of $\mathcal{C}$. This is the precise
sense in which origami is not an illustration but the native construction model of this
family.
\end{remark}

\section{An answer to a question of Edward Pegg Jr.}

In correspondence with the author~\cite{PeggLetters}, Pegg asked whether the exact
solutions would use ``the same algebraic field as the regular heptagon'' (which
``would be a new form of rigidity''), or ``limited fields.'' The answer, by
Theorem~\ref{thm:origami35}, is the first: the braced $\{35,1\}$ \emph{does} use the
regular heptagon's own cubic field $\Q(\cos\tfrac{2\pi}{7})$ --- its pinning angle is
quadratic over exactly that cubic --- so its coordinate field is the origami tower
$\Q(\cos\tfrac{2\pi}{7})$-then-quadratics, of degree $3\cdot2^k$. The large count
$N=3\,869\,504$ is a property of the generic complex variety, not of the unit shape,
and the primes $103,587$ never enter the folded coordinates. The ``new form of
rigidity'' Pegg anticipated is real and appears twice over: the globally rigid complete
$\{21,2\}$ carries the heptagon cubic (Theorem~\ref{thm:complete}), and the isostatic
braced $\{35,1\}$ carries it too, one quadratic layer up. Pegg's intuition was exactly
right: both heptagons speak the arithmetic of $\cos\tfrac{2\pi}{7}$.

\section*{Open problems}
\emph{(i)} Irreducibility of the \emph{generic} $\{35,1\}$ realization variety over
$\Q$ (Remark~\ref{rem:monodromy} gives numerical evidence; a proof remains open).
\emph{(ii)} The Galois group of the generic count and the geometric meaning of the
primes $103,587$. \emph{(iii)} The braced $\{7/k\}$
family: Laman numbers, fields, and $O5$ vs.\ $O6$. \emph{(iv)} The maximum number
of real faithful realizations. \emph{(v)} Formal (Lean/Coq) certification.
\emph{(vi)} The mountain--valley correspondence of Remark~\ref{rem:towards-theorem}:
a crease pattern whose rigidly foldable states biject with the faithful realizations of
$\{35,1\}$, with $g_k=0$ the foldability equations.

\section*{Code and data availability}
The Gram--SVD certifier (Python), the continuity-reduction search, and the C\raise.08ex\hbox{+}\kern-.05em\raise.08ex\hbox{+}\
Laman-number program (after~\cite{Capco}) are available from the author; running
the latter on $\{35,1\}$ reproduces \eqref{eq:N} in about five minutes.

\section*{Acknowledgments}
This study was financed in part by the Coordenação de Aperfeiçoamento de
Pessoal de Nível Superior -- Brasil (CAPES) -- Finance Code 001.
The author thanks his doctoral advisor, Gilson Alexandre Ostwald Pedro da
Costa, for his guidance and support; Américo Barbosa da Cunha Junior for the
discussions on the geometry of origami; and Edward Pegg Jr.\ for the questions
that steered this work.


\begin{thebibliography}{9}
\bibitem{Schaefer} M.~Schaefer, \emph{Realizability of graphs and linkages},
in: Thirty Essays on Geometric Graph Theory, Springer, 2013, pp.~461--482.
\bibitem{Laman} G.~Laman, \emph{On graphs and rigidity of plane skeletal
structures}, J.~Engrg.\ Math.\ \textbf{4} (1970) 331--340.
\bibitem{Capco} J.~Capco, M.~Gallet, G.~Grasegger, C.~Koutschan, N.~Lubbes,
J.~Schicho, \emph{The number of realizations of a Laman graph}, SIAM J.~Appl.\
Algebra Geom.\ \textbf{2} (2018) 94--125.
\bibitem{Hendrickson} B.~Hendrickson, \emph{Conditions for unique graph
realizations}, SIAM J.~Comput.\ \textbf{21} (1992) 65--84.
\bibitem{JacksonJordan} B.~Jackson, T.~Jord\'an, \emph{Connected rigidity matroids
and unique realizations of graphs}, J.~Combin.\ Theory Ser.\ B \textbf{94} (2005)
1--29.
\bibitem{Lill} E.~Lill, \emph{R\'esolution graphique des \'equations num\'eriques
de tout degr\'e}, Nouvelles Annales de Math\'ematiques (2)~\textbf{6} (1867) 359--362.
\bibitem{Beloch} M.~P.~Beloch, \emph{Sul metodo del ripiegamento della carta per la
risoluzione dei problemi geometrici}, Periodico di Mat.\ (4)~\textbf{16} (1936) 104--108.
\bibitem{Alperin} R.~C.~Alperin, \emph{A mathematical theory of origami
constructions and numbers}, New York J.\ Math.\ \textbf{6} (2000) 119--133.
\bibitem{AlperinLang} R.~C.~Alperin, R.~J.~Lang, \emph{One-, two-, and multi-fold
origami axioms}, in: Origami$^4$, A K Peters, 2009, pp.~371--393.
\bibitem{Hull} T.~Hull, \emph{Origametry: Mathematical Methods in Paper
Folding}, Cambridge Univ.\ Press, 2020.
\bibitem{MakhulSchichoWarren} M.~Makhul, J.~Schicho, A.~Warren, \emph{On Galois
groups of type-1 minimally rigid graphs}, Discrete Comput.\ Geom.\ (2025).
\bibitem{DistanceComponents} \emph{Irreducible components of sets of points in the
plane that satisfy distance conditions}, arXiv:2403.00392 (2024).
\bibitem{PeggLetters} E.~Pegg~Jr., private correspondence with the author
(letters), 2026.
\end{thebibliography}
\end{document}